\newcommand{ \cM }{ \mathcal M } 
\newcommand{ \cC }{ \mathcal C } 
\newcommand{ \cF }{ \mathcal F } 
\newcommand{ \cH }{ \mathcal H } 
\newcommand{ \cO }{ \mathcal O }
\newcommand{ \cD }{ \mathcal D }
\newtheorem{theorem}{Theorem}[section]
\newtheorem{lemma}[theorem]{Lemma}
\newtheorem{claim}[theorem]{Claim}
\newtheorem{definition}[theorem]{Definition}
\newtheorem{remark}[theorem]{Remark}
\begin{document}

\begin{titlepage}
  \begin{center}
    \setlength{\parindent}{1.5em}
      \Large
      \textbf{An Alternative Model for Coherent Sheaves over Noetherian Schemes}

      \vspace{0.5cm}
      \Large  
      \textbf{Ron Held}
       
      \vspace{0.3cm}           
      \Large
      Tel-Aviv University \\
      August 2021
      \vspace{1.0cm}           

  \end{center}

  \begin{abstract}
    The category of coherent sheaves over a noetherian scheme is very important for studying the properties of a given scheme. For noetherian schemes it is a well-known fact that the topology can be fully recovered from the corresponding partially ordered set of its points together with the specializaion partial order.\\
    \indent
    Here we show that the same thing holds for the category of coherent sheaves over a noetherian scheme, i.e. there exists an equivalent description of this category in the "language" of this poset. First we build the equivalent notion of the structure sheaf, then we introduce the desired functor from the category of coherent sheaves to a certain category of presheaves over it, show this functor is fully faithful, describe its essential image and hence find an equivalent category for the category of coherent sheaves over a noetherian scheme.\\
    \indent
    This paper is based on the author's master's thesis.    
  \end{abstract}

\end{titlepage}

\renewcommand*\contentsname{Table of Contents}

\pagebreak

\tableofcontents

\pagebreak

\section{Introduction}
\indent
The goal of this paper is to give an alternative description for the important category of coherent sheaves over a noetherian scheme. This description is motivated by the fact that every noetherian scheme is a sober space, hence the topology of a noetherian scheme is completely determined by the topological relation between its points known as specialization: $x \to y$ whenever $y \in \bar{\{x\}}$. This property is an important distinction between the underlying spaces of schemes and algebraic varieties. Following this route we can consider a noetherian scheme with a structure sheaf $(X, \cO_{X})$ equivalently as a poset with a presheaf of local algebras $(\cC, \cO_{\cC})$. Using this structure we show in Section \ref{sec:specializaion} that each sheaf of $\cO_{X}$-modules gives rise to a presheaf of $\cO_{\cC}$-modules, using its stalks and localization homomorphisms between them. And so we introduce a specialization functor $S \colon \cM(\cO_{X}) \to PSh(\cO_{\cC})$ from the category of coherent sheaves of $\cO_{X}$-modules to the category of presheaves of $\cO_{\cC}$-modules.\\
\\
\indent
In Section \ref{sec:fullyfiathful} we prove that the functor $S$ is fully faithful (Claim \ref{Claim 1}). The proof is quite standard following this Key Lemma (Lemma \ref{Key Lemma}):\\
 Let $R$ be a noetherian ring, $M$ a finitely generated $R$-module. Then $M \cong \varprojlim_{p \in SpecR} M_{p}$.\\
\\
\indent
The proof of this lemma is given in Section \ref{sec:keylemma}, and utilizes the very special properties of the associated primes of a module. As our description is using the specialization relation between points, it turns out that all the "action" happens on the stalks at the associated primes. This property is given by the fact that the map $\nu \colon M \to \prod_{p \in Ass_{R}(M)}M_{p}$ which maps each element to its localizations at the associated primes is injective. This property together with the finiteness of the set of the associated primes provide us with a constructive way to reconstruct the elements of the module $M$ (i.e. global sections) from the elements of its localizations (i.e. germs). This procedure can be generalized naturally to general noetherian schemes and coherent sheaves over them via the notion of associated points of a coherent sheaf.\\
\\
\indent
Now that we have a fully faithful functor $S \colon \cM(\cO_{X}) \to PSh(\cO_{\cC})$, we want to find a description for the essential image of $S$, and so to find the full subcategory of $PSh(\cO_{\cC})$ which is equivalent to $\cM(\cO_{X})$ by $S$. This is done in Section \ref{sec:image}, where we define some notions of special (ringed) posets - "closed" (and open), inspired by the properties of sober spaces; and "affine noetherian", inspired by the result of the Key Lemma (Lemma \ref{Key Lemma}), bearing in mind a noetherian ring is a finitely generated module over itself. Looking at the objects of $PSh(\cO_{\cC})$ we construct a functor for noetherian affine posets $T \colon PSh(\cO_{\cC}) \to PSh(\cO_{\cC})$ by taking inverse limit and then localizing again. The main requirement for an object of $PSh(\cO_{\cC})$ to be in the essential image of $S$ would be that $T$ preserves the original structure of the presheaf. We call those elements of $PSh(\cO_{\cC})$ preserved by $T$ "quasi-admissible" presheaves. If in addition they have some finiteness property we call them "admissible". In Claim \ref{Claim 2} we prove that the essential image of $S$ is exactly the full subcategory of admissible presheaves. Denoting this subcategory by $\cM(\cO_{\cC})$ and combining the results of Claim \ref{Claim 1} and Claim \ref{Claim 2} we yield the following theorem and the main result of this paper (Theorem \ref{Theorem 1}): the specialization functor $S$ is an equivalence of categories $\cM(\cO_{X}) \cong \cM(\cO_{\cC})$.

\section{The Specialization Functor $S$}
\label{sec:specializaion}
\indent
Let $(X,\cO_{X})$ be a noetherian scheme. It is well-known that $X$ is a sober space, i.e. every irreducible closed subset of $X$ is the closure of exactly one point of $X$. Thus, we have a partial order on the points of $X$ called the "specialization" partial-order: $x \leq y$ when $y \in \bar{\{x\}}$. This partial order makes $X$ into a poset, which we will consider as a (small) category, denoted by $\cC$, whose objects are the points of $X$, and morphisms $x \to y$ whenever $x \leq y$ (i.e. $y \in \bar{\{x\}}$).\\
\indent
For every point $x \in X$, we have $\cO_{X,x} = \cO_{x}$ is a local algebra. For any regular function $f \in \cO_{X}(X)$ on $X$ (and equivalently on any other open $U \subset X$), denote by $f(x)$ its germ (i.e. image) in $\cO_{x}$. Whenever $x \leq y$ we have the canonical localization homomorphism of local algebras $o_{x,y} \colon \cO_{y} \to \cO_{x}$ given by: $f(y) \mapsto f(x) = f(y) \otimes 1$ ($f(y) \in \cO_{y}$). So, $\cO_{X}$ induces a natural presheaf (i.e. contravariant functor) of local algebras on $\cC$, which we will denote by $\cO_{\cC}$ ($\cO_{\cC}(x) := \cO_{X,x}$ and $\cO_{\cC}(x \to y) := o_{x,y}$).
\begin{definition}
\normalfont (i) A ringed poset $(\cC,\cO_{\cC})$ is a poset $\cC$ (considered as a small category) together with a presheaf of rings $\cO_{\cC}$ on $\cC$.\\
(ii) A locally ringed poset $(\cC,\cO_{\cC})$ is a poset $\cC$ (considered as a small category) together with a presheaf of local rings $\cO_{\cC}$ on $\cC$.
\end{definition}
\begin{remark}
\normalfont A ringed poset is actually a contravariant functor from the poset $\cC$ to the category of rings. A locally ringed poset is actually a functor from the poset $\cC$ to the category of local rings. A morphism of locally ringed spaces ($F \colon (\cC,\cO_{\cC}) \to (\cD,\cO_{\cD})$) is a morphism of functors, with the additional condition that any homomorphism $\cO_{\cC}(x) \to \cO_{\cD}(F(x))$ is a local homomorphism \cite[pp. 72-73]{hartshorne:ag}.
\end{remark}
\indent
Thus, the specialization partial order makes a noetherian scheme $(X,\cO_{X})$ into a locally ringed poset $(\cC,\cO_{\cC})$.
\begin{definition}
\normalfont Denote by $PSh(\cO_{\cC})$ the category of presheaves (contravariant functors) of modules over the presheaf of local rings (algebras) $\cO_{\cC}$.
\end{definition}
\indent
Let $\cF$ be a coherent sheaf of $\cO_{X}$-modules. For every $x \in X$ we have the stalk $\cF_{x}$ is an $\cO_{x}$-module (and a finitely generated one). Whenever $x \leq y$ we have a canonical homomorphism of stalks: $f_{x,y} \colon \cF_{y} \to \cF_{x}$ given by: $s_{y} \mapsto s_{x}$ for each $s_{y} \in \cF_{y}$, where $f_{x,y}$ is given by "localization": $s_{y} \mapsto s_{y} \otimes_{\cO_{y}} 1$. So every coherent sheaf $\cF$ induces a presheaf of (finitely-generated) modules over the presheaf of local algebras $\cO_{\cC}$, i.e. an object of $PSh(\cO_{\cC})$. Given a morphism of coherent sheaves $\psi \colon \cF \to \cH$, $\psi$ induces a morphism of presheaves over $\cO_{\cC}$ in a natural way (take the induced morphisms on the stalks). By the definition of $PSh(\cO_{\cC})$ a morphism $\phi$ of presheaves is compatible with "localization", i.e. $\phi_{x} = \phi_{y} \otimes 1$, whenever $x \to y$. \\
\\
\indent
To conclude, we have a "specialization" functor $S \colon \cM(\cO_{X}) \to PSh(\cO_{\cC})$, which assigns to every coherent sheaf of $\cO_{X}$-modules a presheaf of $\cO_{\cC}$-modules.

\section{The Functor $S$ is Fully Faithful}
\label{sec:fullyfiathful}
\begin{claim}
\label{Claim 1} 
The functor $S$ is fully faithful.\\
i.e.: Let $\cF, \cH$ be two coherent sheaves of $\cO_{X}$-modules, and let $(\phi_{x})_{x \in \cC}$ be a morphism of presheaves between $S\cF$ and $S\cH$. Then there exists a unique morphism of coherent sheaves $\psi \colon \cF \to \cH$ inducing this family.
\end{claim}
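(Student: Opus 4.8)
The plan is to construct $\psi$ affine-locally out of the family $(\phi_x)$ by means of the Key Lemma, while disposing of uniqueness first by a direct stalk argument. \emph{Uniqueness} is the easy part: if $\psi,\psi'\colon\cF\to\cH$ both induce $(\phi_x)_{x}$ on stalks, then $\psi-\psi'$ vanishes on every stalk, so for every open $U$ and every $s\in\cF(U)$ the section $(\psi-\psi')(s)$ has zero germ at each point of $U$; since $\cH$ is a sheaf, $\cH(U)\to\prod_{x\in U}\cH_x$ is injective, hence $(\psi-\psi')(s)=0$ and $\psi=\psi'$. This same injectivity is the workhorse for the compatibility checks below: two $\cO_X(U)$-linear maps $\cF(U)\to\cH(U)$ inducing the same map on every stalk must agree.

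\emph{Existence, affine case.} Because the affine opens of $X$ form a basis of the topology and $\cH$ is a sheaf, to give a morphism of $\cO_X$-modules $\cF\to\cH$ it suffices to give an $\cO_X(U)$-linear map $\psi_U\colon\cF(U)\to\cH(U)$ for every affine open $U$, compatibly with restriction between affine opens. Fix $U=\operatorname{Spec}R$; since $X$ is noetherian, $R$ is noetherian, $M:=\cF(U)$ and $N:=\cH(U)$ are finitely generated $R$-modules, and for $p\in U$ one has $\cF_p=M_p$, $\cH_p=N_p$. Restricting $(\phi_x)$ to the points of $U$ yields, for each prime $p$, an $R_p$-linear map $\phi_p\colon M_p\to N_p$, and the condition defining a morphism in $PSh(\cO_{\cC})$ — that $\phi_p=\phi_q\otimes 1$ whenever $p\subseteq q$ — says precisely that $(\phi_p)_{p\in U}$ is a morphism of the inverse systems whose limits appear in the Key Lemma. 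Passing to inverse limits and transporting along the canonical isomorphisms $M\cong\varprojlim_{p}M_p$ and $N\cong\varprojlim_{p}N_p$ of the Key Lemma gives an $R$-linear map $\psi_U\colon M\to N$. A short diagram chase — using that the structure map $\varprojlim_{q}M_q\to M_p$ corresponds under the Key Lemma isomorphism to the localization $M\to M_p$, and that both maps in sight are $R_p$-linear and agree on the elements $m/1$ — shows that the stalk of $\psi_U$ at $p$ is exactly $\phi_p$.

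\emph{Gluing and conclusion.} For affine opens $V\subseteq U$ and any point $q\in V$, the restriction $(\psi_U)|_V$ and the map $\psi_V$ each induce $\phi_q$ on the stalk at $q$; by the injectivity of $\cH(V)\to\prod_{q\in V}\cH_q$ they therefore coincide. Hence the $\psi_U$ are compatible with restriction among affine opens and glue to a morphism of sheaves $\psi\colon\cF\to\cH$, which is $\cO_X$-linear (checked on the affine basis) and, again checking affine-locally, induces $\phi_x$ on every stalk. The only genuinely non-formal ingredient is the Key Lemma itself: a morphism in $PSh(\cO_{\cC})$ encodes compatibility of germs only along specialization inside the poset $\cC$, which a priori is far weaker than the full sheaf-gluing condition, and it is exactly the Key Lemma — equivalently the finiteness of $\operatorname{Ass}_R(M)$ together with the injectivity $M\hookrightarrow\prod_{p\in\operatorname{Ass}_R(M)}M_p$ — that forces such a poset-compatible family of germs to assemble into an honest section. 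I expect this to be the only real point of the argument; the rest is bookkeeping with stalks and the standard fact that a morphism of sheaves may be prescribed on a basis.
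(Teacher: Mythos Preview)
Your proof is correct and uses the Key Lemma as its only substantive input, just as the paper does. The paper, however, organizes the argument differently: instead of applying the Key Lemma to both $\cF(U)$ and $\cH(U)$ on each affine open and taking the induced map on inverse limits, it first reduces to the special case $\cF=\cO_X$. The reduction goes through the internal Hom: since $\mathcal{H}om(\cF,\cH)$ is again coherent and its stalk at $x$ is $\mathrm{Hom}_{\cO_x}(\cF_x,\cH_x)$, a morphism $S\cF\to S\cH$ in $PSh(\cO_{\cC})$ is exactly a localization-compatible family of germs of $\mathcal{H}om(\cF,\cH)$, i.e.\ a morphism $S\cO_X\to S\mathcal{H}om(\cF,\cH)$; together with $\mathrm{Hom}(\cF,\cH)=\Gamma(X,\mathcal{H}om(\cF,\cH))=\mathrm{Hom}(\cO_X,\mathcal{H}om(\cF,\cH))$ this reduces full faithfulness to the case of the structure sheaf. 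In that case the Key Lemma is invoked only once, on the target, to recognize a compatible family of germs as an honest section; gluing over a finite affine cover then finishes as in your argument. The paper's route is a bit slicker once one is willing to import the coherence of $\mathcal{H}om$ and its stalk description; your direct approach is more self-contained, trading that input for a routine verification that the limit map you build really localizes back to $\phi_p$.
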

For the proof of Claim \ref{Claim 1} we will use the following Key Lemma (Lemma \ref{Key Lemma}), which we prove in the next section:\\
\\
Key Lemma: Let $R$ be a noetherian ring, $M$ a finitely generated $R$-module. Then $M \cong \varprojlim_{p \in SpecR} M_{p}$.\\

\begin{proof}[Proof of Claim \ref{Claim 1}, given Key Lemma (Lemma \ref{Key Lemma})]
As $\mathcal{H}om(\cF,\cH)$ is a coherent sheaf itself \cite[Ex. 5.1.6(b), p. 172]{liu:curves}, and $Hom(\cF,\cH) = \Gamma(X,\mathcal{H}om(\cF,\cH))$, it is enough to prove this statement for the case where $\cF = \cO_{X}$.\\
\indent
Suppose there exists such a morphism of coherent sheaves, then it is unique by the following reasoning: if there are two morphisms $\mu,\nu \colon \cF \to \cH$ such that $\mu_{x} = \nu_{x}$ for any $x \in X$, then we have $\mu = \nu$ \cite[p. 15]{eh:schemes}, i.e. two different morphisms between $\cF$ and $\cH$ induce two different morphisms of presheaves $S\cF \to S\cH$.\\
\indent
For the proof of existence, suppose first $X$ is an affine noetherian scheme, i.e. $X = SpecR$ where $R$ is a noetherian ring. The sheaf $\cH$ is a coherent sheaf, and so $\Gamma(X,\cH) = \cH(X)$ is a finitely generated $\cO_{X}(X)$-module and in particular $\cH_{x} = \cH(X)_{x}$ for every prime $x \in X$ \cite[Proposition 5.1(b), p. 110]{hartshorne:ag}. Let $(\phi_{x})_{x \in X}$ be a morphism of presheaves with $\phi_{x} \colon \cO_{\cC}(x) = \cO_{x} \to \cH_{x}$. Each morphism is completely determined by the choice of $\phi_{x}(1) = s_{x} \in \cH_{x}$, and so this is nothing but a family of germs on $X$ compatible with localization. Then we are done by Key Lemma (Lemma \ref{Key Lemma}).\\
\indent
Now, given an arbitrary noetherian scheme $X$, let $U_{i} = SpecR_{i}$ be a finite affine cover (each $R_{i}$ is noetherian). Again, let $(\phi_{x})_{x \in X}$ be a morphism of presheaves with $\phi_{x} \colon \cO_{\cC}(x) = \cO_{x} \to \cH_{x}$. Each morphism is completely determined by the choice of $\phi_{x}(1) = s_{x} \in \cH_{x}$, and so this is nothing but a family of germs on $X$ compatible with localization. So, let $(\sigma_{x})_{x \in X}$ be such a family of germs on $X$ compatible with localization. Each restriction of $\sigma$ to $U_{i}$ is again a family of compatible germs, and so we know it corresponds to a section of $\cH(U_{i})$. Glue those sections to gain a global section on $X$.\\
\indent
Thus, each family of compatible germs (on $X$) corresponds to a global section of $\cH$, and so for every $\cH \in \cM(\cO_{X})$ we have a bijection $Hom(\cO_{X},\cH) \leftrightarrow Hom(\cO_{\cC}, S\cH)$, and so $S$ is fully faithful.
\end{proof}

\section{Proof of Key Lemma}
\label{sec:keylemma}
In this section we prove the Key Lemma used for the proof of Claim \ref{Claim 1}:
\begin{lemma}
\label{Key Lemma}
Let $R$ be a noetherian ring, $M$ a finitely generated $R$-module. Then $M \cong \varprojlim_{p \in SpecR} M_{p}$.
\end{lemma}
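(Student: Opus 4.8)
The plan is to show that the canonical map $\iota \colon M \to \varprojlim_{p \in \operatorname{Spec} R} M_{p}$, $m \mapsto (m/1)_{p}$, is an isomorphism. Here the limit is taken over $\operatorname{Spec} R$ with the specialization order, so an element of $\varprojlim_{p} M_{p}$ is a family $(s_{p})_{p}$ with $s_{p} \in M_{p}$ such that $s_{p}$ is the image of $s_{q}$ under the localization $M_{q} \to M_{p}$ whenever $p \subseteq q$. Injectivity is the easy half: composing $\iota$ with the projection onto $\prod_{p \in \operatorname{Ass}_{R}(M)} M_{p}$ is precisely the map $\nu$ from the Introduction, which is injective because the zero-divisors on $M$ form the union of the associated primes of $M$; hence $\iota$ is injective.

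For surjectivity I would argue by d\'evissage, reducing to the case $M \cong R/q$ with $q$ prime. Since $R$ is noetherian and $M$ is finitely generated, fix a finite filtration $0 = F_{0} \subset F_{1} \subset \cdots \subset F_{r} = M$ with $F_{j}/F_{j-1} \cong R/q_{j}$ for primes $q_{j}$, and induct on $r$; the case $r = 0$ is trivial and $r = 1$ is the base case $M \cong R/q$ handled in the next paragraph. For $r \ge 2$, set $M' = F_{r-1}$ and $M'' = M/F_{r-1} \cong R/q_{r}$, so $0 \to M' \to M \to M'' \to 0$ is exact. Given $(s_{p})_{p} \in \varprojlim_{p} M_{p}$, its image in $\varprojlim_{p} M''_{p}$ is $\iota_{M''}(m'')$ for some $m'' \in M''$ by the base case; lift $m''$ to $m \in M$. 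Since localization is exact, $\ker(M_{p} \to M''_{p}) = M'_{p}$ for each $p$, so the family $(s_{p} - m/1)_{p}$ has all entries in $M'_{p}$ and defines an element of $\varprojlim_{p} M'_{p}$; by the inductive hypothesis this is $\iota_{M'}(m')$ for some $m' \in M'$, whence $(s_{p})_{p} = \iota_{M}(m + m')$. I would run this on the one explicit family, using only the tautological left-exactness of $\varprojlim$, because $\operatorname{Spec} R$ is not a directed poset, so $\varprojlim$ is not exact and a plain five-lemma argument is not available.

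It remains to settle the base case $M = R/q$, a noetherian domain with fraction field $K = (R/q)_{(0)}$. For every prime $p \supseteq q$ the localization $(R/q)_{p}$ sits inside $K$, and the compatibility conditions force all entries $s_{p}$ of a family $(s_{p})_{p}$ to be a single element $g \in K$ (while $s_{p} = 0$ for $p \not\supseteq q$); in particular $g \in (R/q)_{\mathfrak{m}}$ for every maximal ideal $\mathfrak{m}$ of $R/q$. Since a domain equals the intersection, inside its fraction field, of its localizations at all maximal ideals, $g \in R/q$, and $\iota(g) = (s_{p})_{p}$.

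The main obstacle is surjectivity --- globalizing a compatible family of germs --- and this is exactly where the hypotheses ``noetherian ring, finitely generated module'' enter, through the finiteness of the prime filtration (equivalently, of $\operatorname{Ass}_{R}(M)$). Two points need care: the d\'evissage must be carried out on a fixed element because $\varprojlim$ over the non-directed poset $\operatorname{Spec} R$ is only left exact; and the base case relies on the standard fact that a domain is the intersection of its localizations at the maximal ideals. Alternatively one could work directly from a primary decomposition of $0$ in $M$ and the finite set of associated primes, reconstructing $M$ inside $\prod_{p \in \operatorname{Ass}_{R}(M)} M_{p}$, but then one must check a Chinese-remainder-type compatibility when gluing the quotients $M/N_{i}$.
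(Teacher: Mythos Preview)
Your proof is correct, but it follows a genuinely different strategy from the paper's. The paper argues directly in sheaf-theoretic terms: given a compatible family $(\sigma_{x})$, it fixes $x$, writes $\sigma_{x}=\tau/f(x)$, and then builds an open neighborhood $U_{x}$ by removing $V(f)$ together with the closures of the associated primes that do \emph{not} specialize to $x$; on $U_{x}$ the injectivity of $\nu_{z}\colon M_{z}\to\prod_{y\in\operatorname{Ass}_{R_{z}}(M_{z})}M_{y}$ forces $\sigma_{z}=\tau/f(z)$ for every $z\in U_{x}$, so $(\sigma_{x})$ is a section in the sense of the structure-sheaf definition. Thus the paper uses the finiteness of $\operatorname{Ass}_{R}(M)$ and the embedding $\nu$ for surjectivity, not just injectivity. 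Your route is the d\'evissage: a prime filtration reduces to $M=R/q$, where the statement becomes the classical fact that a domain is the intersection of its localizations at maximal ideals, and the inductive step is an element-chase using only left exactness of $\varprojlim$. Your argument is cleaner homologically and avoids the neighborhood construction; the paper's argument is more constructive, stays closer to the sheaf picture, and isolates exactly the hypothesis that drives the result (a finite set of primes $W$ with $M\hookrightarrow\prod_{p\in W}M_{p}$), which is the content of the remark following the proof.
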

For the proof we first provide some notations and mention essential properties regarding the associated points of a coherent sheaf and their connection to the associated primes of a module. These are standard notions in algebraic geometry [e.g. \cite[pp. 147-148]{mumford:red}] and commutative algebra [e.g. \cite[pp. 37-42]{matsumura:ring}].
\begin{definition}
\normalfont Let $X$ be a noetherian scheme, and let $\cF$ be a coherent sheaf of $\cO_{X}$-modules. An associated point of $\cF$ is a point $x \in X$ such that there exist an open neighborhood $x \in U \subset X$ and a section $s \in \cF(U)$ such that $x$ is a generic point of the support of s (where $Supp(s) := \{y \in U: s_{y} \neq 0\}$). In the case where $X=SpecR$ is affine, and so $\cF=\widetilde{M}$, we have the associated points of the coherent sheaf $\cF$ are exactly the associated primes of the $R$-module $M$.
\end{definition}
Thus, we shall mention now without a proof the definition and some properties of the associated primes of a module:
\begin{definition}
\normalfont Let $R$ be a (commutative) ring, and let $M$ be an $R$-module. An associated prime of $M$ is a prime ideal $p \in SpecR$ such that there exists $m \in M$ with $p = Ann(m)$. Denote the set of all the associated primes of $M$ by $Ass_{R}(M)$.\\
Equivalently, let $Z := Supp(M) = \{x \in SpecR \colon M_{x} \neq 0\}$. The subset $Z$ is closed, and so there is a finite number of points $x_{1},...x_{n} \in SpecR$ such that $Z = \bigcup_{i} \bar{\{x_{i}\}}$ (i.e. the $x_{i}'s$ are the generic points of the Support of $M$). Denote this set by $A_{M} := \{x_{1},...,x_{n}\}$. Now consider the set $Ass_{R}(M) := \bigcup_{N \subset M} A_{N}$ ($N$ is a submodule of $M$).\\
The two definitions coincide, but the second is more general in some sense.
\end{definition} 

The associated primes have some special and important properties:
\begin{claim}[properties of the associated primes of a module]
Let $R$ be a noetherian ring, and let $M$ be an $R$-module.
\begin{enumerate}
	\item $Ass_{R}(M) \subset Supp(M)$. \cite[Theorem 6.5 (ii), p. 39]{matsumura:ring}
	\item The set of minimal elements of $Ass_{R}(M)$ (minimal with respect to set-theoretic inclusion) and the set of minimal elements of $Supp(M)$ coincide. \cite[Theorem 6.5 (iii), p. 39]{matsumura:ring}
	\item Let $S \subset R$ be a multiplicatively closed subset. Then $Ass_{S^{-1}R}(S^{-1}M) = Ass_{R}(M) \cap \{p \in SpecR \colon p \cap S = \emptyset\}$. In particular, if $p \in SpecR$ any prime it follows that $Ass_{R_{p}}(M_{p}) = Ass_{R}(M) \cap Spec(R_{p})$. \cite[Theorem 6.2, p. 38]{matsumura:ring}
	\item If $M$ is a noetherian module (e.g. when $M$ is finitely generated and $R$ is noetherian) then $Ass_{R}(M)$ is finite. \cite[Theroem 6.5 (i), p. 39]{matsumura:ring}
	\item The map $\nu \colon M \to \prod_{p \in Ass_{R}(M)}M_{p}$ which maps each element to its localizations at the associated points is injective. \cite[5.5.3.1, p. 168]{vakil:rising}
\end{enumerate}
\end{claim}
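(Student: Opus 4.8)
The plan is to reduce all five properties to a single structural \emph{engine lemma} about modules over the noetherian ring $R$: if $M \neq 0$ then $Ass_R(M) \neq \emptyset$, and more precisely every maximal element (under inclusion) of the family $\{Ann(m) : 0 \neq m \in M\}$ is prime, hence an associated prime. Its proof is the familiar one: the family is nonempty and has a maximal element $p = Ann(m_0)$ by the ascending chain condition, and if $ab \in p$ with $a \notin p$ then $Ann(am_0)$ contains $p$ and $b$ while $am_0 \neq 0$, so maximality forces $b \in p$. Alongside this I will record two purely formal facts needing no noetherian hypothesis: (monotonicity) $N \subseteq M$ implies $Ass_R(N) \subseteq Ass_R(M)$, immediate from the annihilator definition; and (subadditivity) for a short exact sequence $0 \to M' \to M \to M'' \to 0$ one has $Ass_R(M) \subseteq Ass_R(M') \cup Ass_R(M'')$, proved by splitting on whether $Rm \cap M' = 0$. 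I will also use the elementary computation $Ass_R(R/p) = \{p\}$, valid because $R/p$ is a domain.

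With these in hand, properties (1) and (5) are the easiest. For (1), a $p \in Ass_R(M)$ gives an embedding $R/p \hookrightarrow M$ sending $1 \mapsto m$ with $Ann(m) = p$; localizing at $p$ (an exact functor) yields a nonzero submodule $(R/p)_p = R_p/pR_p$ of $M_p$, so $M_p \neq 0$ and $p \in Supp(M)$. For (5), suppose $0 \neq m \in \ker \nu$. Applying the engine lemma to the nonzero cyclic module $Rm$ produces $q \in Ass_R(Rm)$, and by monotonicity $q \in Ass_R(M)$; choosing $rm$ with $Ann(rm) = q$ makes $rm \neq 0$ generate a copy of $R/q$, and the same exactness argument shows $rm/1 \neq 0$ in $M_q$, whence $m/1 \neq 0$ in $M_q$ — contradicting $m \in \ker\nu$. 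Thus $\nu$ is injective.

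Property (4) I would obtain from a dévissage filtration. Starting from $M \neq 0$, the engine lemma gives $p_1 \in Ass_R(M)$ and a submodule $M_1 \cong R/p_1$; replacing $M$ by $M/M_1$ and iterating produces a chain $0 = M_0 \subset M_1 \subset \cdots$ with $M_i/M_{i-1} \cong R/p_i$. Because $M$ is noetherian this ascending chain terminates at some $M_n = M$, and subadditivity together with $Ass_R(R/p_i) = \{p_i\}$ gives $Ass_R(M) \subseteq \{p_1, \dots, p_n\}$, a finite set. Property (2) then follows by combining (1), (3), and the engine lemma: a prime $p$ minimal in $Supp(M)$ has $Supp(M_p) = \{pR_p\}$, so $Ass_{R_p}(M_p)$ — nonempty by the engine lemma and contained in $Supp(M_p)$ by (1) — equals $\{pR_p\}$, and by (3) this forces $p \in Ass_R(M)$; since $Ass_R(M) \subseteq Supp(M)$ such a $p$ is automatically minimal in $Ass_R(M)$. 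The reverse inclusion of the two minimal sets is then formal: any $p$ minimal in $Ass_R(M)$ lies in $Supp(M)$ by (1), contains a minimal prime $q$ of $Supp(M)$, which lies in $Ass_R(M)$ by the direction just shown, and minimality gives $q = p$.

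The main technical obstacle is property (3), the compatibility with localization, and I would prove it first since (2) depends on it. The inclusion $Ass_R(M) \cap \{p : p \cap S = \emptyset\} \subseteq Ass_{S^{-1}R}(S^{-1}M)$ is straightforward: if $p = Ann(m)$ and $p \cap S = \emptyset$ one checks directly that $Ann_{S^{-1}R}(m/1) = S^{-1}p$. The delicate direction is the reverse, and it is exactly here that the noetherian hypothesis is indispensable. Given $S^{-1}p = Ann_{S^{-1}R}(x/1)$ with $p$ prime and $p \cap S = \emptyset$, I write $p = (a_1, \dots, a_n)$ using that $R$ is noetherian; each relation $a_i x/1 = 0$ clears to $t_i a_i x = 0$ in $M$ for some $t_i \in S$, and setting $t = t_1 \cdots t_n$ yields a single element with $p \subseteq Ann(tx)$. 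The equality $Ann(tx) = p$ — whence $p \in Ass_R(M)$ — then follows from a clearing-of-denominators argument that uses that $p$ is prime and meets $S$ trivially. Finite generation of $p$ is precisely what lets me absorb all the denominators into one $t$; without it the pullback of an associated prime can fail, so this is the step I expect to require the most care.
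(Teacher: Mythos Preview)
Your proposal is correct and self-contained, but there is nothing in the paper to compare it against: the paper explicitly states these properties ``without a proof'' and simply cites Matsumura and Vakil for each item. So you have supplied substantially more than the paper does.

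A brief comment on content. Your engine-lemma-plus-d\'evissage strategy is the standard one and is essentially what one finds in the cited references, so in that sense you are reproducing Matsumura's arguments rather than diverging from them. One small point worth tightening: in the reverse inclusion for property~(2) you assert that any $p \in Supp(M)$ ``contains a minimal prime $q$ of $Supp(M)$''. For an arbitrary module $M$ the support need not be closed, so existence of such a $q$ is not automatic from general topology. The clean fix is to bypass this entirely: if $p$ is minimal in $Ass_R(M)$ but some $q \subsetneq p$ lies in $Supp(M)$, then $M_q \neq 0$, the engine lemma gives $Ass_{R_q}(M_q) \neq \emptyset$, and property~(3) produces an associated prime of $M$ strictly contained in $p$, contradicting minimality. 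This is the same mechanism you already used for the forward inclusion and avoids any appeal to minimal elements of $Supp(M)$.
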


Now we can turn to the proof of Key Lemma (Lemma \ref{Key Lemma}):

\begin{proof}
  Denote $X:=SpecR$.
  Let $(\phi_{x})_{x \in X}$ be a morphism of presheaves with $\phi_{x} \colon R_{x} \to M_{x}$. Each morphism is completely determined by the choice of $\phi_{x}(1) = s_{x} \in M_{x}$, and so this is nothing but a family of germs on $X$ compatible with localization. Recall \cite[Definition II.5, p. 110]{hartshorne:ag} that a section $s \in M$ is a function $s \colon X \to \bigsqcup_{x \in X} M_{x}$, such that for each $x \in X$ there is a neighborhood of $x$, $U_{x} \subset X$, and there are $\tau \in M$ and a regular function $f \in R$ such that for each $y \in U_{x}$ we have $f(y)$ is invertible (in $R_{y}$) and $s(y) = \frac{\tau}{f(y)} \in M_{y}$.\\
  \indent
  Let $\sigma:=(\sigma_{x})_{x \in X}$ be a family of germs on $X$ compatible with localization, and let $x \in X$ a point. We have $\sigma_{x} \in M_{x}$, i.e. there are $\tau \in M$ and a regular function $f \in R$ such that $f(x)$ is invertible and $\sigma_{x} = \frac{\tau}{f(x)}$.\\
  \indent
  As $M$ is a finitely generated $R$-module, and $R$ is a noetherian ring, it follows (property 4) that the set $T := Ass_{R}(M)$ is finite. Moreover, we have $T_{x} := Ass_{R_{x}}(M_{x}) = \{y \in T \colon x \in \bar{\{y\}}\} \subset T$ (property 3). In words: $T_{x}$ is the set of the associated points that have $x$ in their closure.\\ 
  \indent
  Let $Z_{x} := V(f) \bigcup (\cup_{y \in T \setminus T_{x}} \bar{\{y\}}))$. The subset $Z_{x}$ is closed (as $V(f)$ is closed and the union is finite) and $x \notin Z_{x}$, and so $U_{x} := X \setminus Z_{x}$ is a neighborhood of $x$. Now $U_{x}$ is an open subset with these two essential properties:
  \begin{enumerate}
    \item $f$ is invertible at every point $y \in U_{x}$ (i.e. $f(y)$ is invertible).
    \item The set of associated points contained in $U_{x}$ is exactly $T_{x}$ (i.e. we dropped all the "bad points" that do not have $x$ in their closure). So, For every $z \in U_{x}$ we have $T_{z} \subset T_{x}$.
  \end{enumerate}
  \indent
  Let $y \in U_{x}$ be a point such that $x \in \bar{\{y\}}$. By compatabilty of $\sigma$ with localization we have $\sigma_{y} = \sigma_{x} \otimes 1 = \frac{\tau}{f(y)} \in M_{y}$. This holds in particular when $y \in T_{x}$.\\
  \indent
  For every $z \in X$, denote by $\nu_{z} \colon M_{z} \to \prod_{y \in T_{z}} M_{y}$ (where $T_{z} := Ass_{R_{z}}(M_{z})$ as before) the map which maps any element of $M_{z}$ to its localizations at the associated points. $\nu_{z}$ is injective for every $z \in X$ (property 5).\\
  \indent
  Now let $z \in U_{x}$ be a point such that $x \notin \bar{\{z\}}$. We have $T_{z} \subset T_{x}$. By compatabilty of $\sigma$ with localization we have $\sigma_{z} \otimes 1 = \sigma_{y} = \frac{\tau}{f(y)}$ for each $y \in T_{z}$. But $\frac{\tau}{f(z)} \otimes1 = \frac{\tau}{f(y)}$ for each $y \in T_{z}$ either, and so $\nu_{z}(\sigma_{z}) = \nu_{z}(\frac{\tau}{f(z)})$, and since $\nu_{z}$ is injective we have $\sigma_{z} = \frac{\tau}{f(z)} \in M_{z}$.\\ 
  \indent
  So, for every $y \in U_{x}$ we found $\tau \in M$ and a regular function $f \in R$ such that $f(y)$ is invertible and $\sigma_{y} = \frac{\tau}{f(y)}$. And so for every $x \in X$ there is such a neighborhood $U_{x}$, and hence we have $(\sigma_{x})_{x \in X}$ is a section of $M$.
\end{proof}

\begin{remark}
\label{finite points remark}
\normalfont The key property we used in the proof was the embedding $\nu \colon M \to \prod_{p \in Ass_{R}(M)}M_{p}$, with the finiteness of $Ass_{R}(M)$. For the case of a general $R$-module $M$, if there exists a finite subset $W \subset SpecR$ such that there is an embedding $\nu_{W} \colon M \to \prod_{p \in W}M_{p}$, then a similar proof will show $M \cong \varprojlim_{p \in SpecR} M_{p}$. For example, this holds for the case where $SpecR$ is finite itself or when $R$ is a local or a semi-local ring.
\end{remark}

\section{The Essential Image of the Functor $S$}
\label{sec:image}
\indent
We proved (Claim \ref{Claim 1}) that $S$ is a fully faithful functor. Now we would like to describe the essential image of $S$, i.e. to find a (full) subcategory of $PSh(\cO_{\cC})$ equivalent to $\cM(\cO_{X})$ (with $S$ as equivalence). In other words, we are trying to formulate necessary and sufficient conditions for the objects of $PSh(\cO_{\cC})$, such that they would be in the essential image of $S$.\\
\\
\indent
First, consider the case where $X$ is an affine noetherian scheme, i.e. $X = SpecR$, where $R$ is a noetherian ring. From the proof of Claim \ref{Claim 1} we know that if an object $G \in PSh(\cO_{\cC})$ is in the essential image of $S$, then the original (coherent) sheaf $\cF$ can be recovered from $G$ by taking the inverse limit for every open $U \subset X$, i.e.: $\cF(U) :=  \varprojlim_{x \in U} G(x)$. The "global" limit object $\cF(X)$ has a natural structure of $R$-module, with $r \cdot (s_{x})_{x \in X} := (\frac{r}{1} \cdot s_{x})_{x \in X}$ for every $r \in R$, $(s_{x})_{x \in X} \in \cF(X)$. Then we can localize $\cF(X)$ at every point $x$ to have a presheaf over $\cO_{\cC}$ again. This way we construct a functor $T: PSh(\cO_{\cC}) \to PSh(\cO_{\cC})$, and a morphism of functors (presheaves in $PSh(\cO_{\cC})$). In this case being preserved by $T$ (i.e. this morphism is isomorphism) is a necessary requirement for the elements in the essential image of $S$.\\
\\
\indent
We shall use again the general notion of locally ringed (sub)posets defined in Section \ref{sec:specializaion}, and define some special locally ringed posets:

\begin{definition}[for a topology on a poset] 
\normalfont A subposet is called closed if its points are the finite union of upsets of a single point. A subposet is called open if its complement is closed. Equivalently, consider the topology on a poset where the irreducible closed subsets are upper sets of a point.
\end{definition}

\begin{remark}
\normalfont In the case where $\cC$ is the corresponding poset of some sober space $X$ (e.g. when $X$ is a noetherian scheme) its closed subposets correspond to the (topological) closed subsets of $X$.
\end{remark}

\begin{definition}
\normalfont A locally ringed poset $(\cC, \cO_{\cC})$ is called affine noetherian if $R_{\cC} := \varprojlim_{x \in \cC} \cO_{\cC}(x)$ is a noetherian ring, and $(\cC, \cO_{\cC})$ is the locally ringed poset induced by $R_{\cC}$ via specialization.
\end{definition}
\indent
In general, for a given locally ringed poset $(\cC, \cO_{\cC})$ one can determine whether it can be covered by a finite cover of open affine noetherian locally ringed subposets (using the topology aforementioned). In the case where $\cC$ is the corresponding poset of some noetherian scheme $X$, this requirement is necessarily satisfied. Note that given a covering $\{\cC_{i}\}$ we can construct for each element in the covering a functor $T_{i}$ following the same aforementioned procedure. Now we can formulate the following definition:

\begin{definition}
\normalfont (i) an object $G \in PSh(\cO_{\cC})$ is called "quasi-admissible" if there exists a covering of $\cC$ by open locally ringed subposets $\cC_{i}$ such that $\forall i, x \in \cC_{i}: T_{\cC_{i}}G(x) \cong G(x)$ (i.e. the corresponding morphism of functors is an isomorphism).\\
(ii) If in addition to that the covering consists of (open) affine noetherian locally ringed subposets and $\forall i: \varprojlim_{x \in \cC_{i}} G(x)$ is a finitely-generated $R_{\cC_{i}}$-module, we call $G$ "admissible" (where $R_{\cC_{i}}$ is the noetherian ring corresponding to $\cC_{i}$).
\end{definition}

\begin{claim}
\label{Claim 2}
The essential image of $S$ is exactly the full subcategory of admissible presheaves.
\end{claim}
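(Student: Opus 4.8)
The plan is to prove the two inclusions separately: every presheaf in the essential image of $S$ is admissible, and every admissible presheaf lies in the essential image of $S$. For the first inclusion, suppose $G = S\cF$ for a coherent sheaf $\cF$ on a noetherian scheme $X$ with corresponding poset $\cC$. Choose a finite affine open cover $\{U_i = \mathrm{Spec}\,R_i\}$ of $X$; each $U_i$ induces an open affine noetherian locally ringed subposet $\cC_i$ with $R_{\cC_i} = R_i$ (this uses that $\cF$ is coherent, so $\cF(U_i)$ is a finitely generated $R_i$-module and $\cF_x = \cF(U_i)_x$ for $x \in U_i$, exactly as in the proof of Claim \ref{Claim 1}). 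On $\cC_i$ the Key Lemma (Lemma \ref{Key Lemma}) gives $\varprojlim_{x \in \cC_i} G(x) = \varprojlim_{x \in U_i} \cF_x \cong \cF(U_i)$ as $R_i$-modules, which is finitely generated; localizing this back at each $x \in \cC_i$ recovers $\cF_x = G(x)$, so $T_{\cC_i}G(x) \cong G(x)$ and the finiteness condition holds. Hence $G$ is admissible.

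For the converse, let $G \in PSh(\cO_{\cC})$ be admissible, with witnessing cover $\{\cC_i\}$ by open affine noetherian locally ringed subposets, $R_i := R_{\cC_i}$ noetherian, $M_i := \varprojlim_{x \in \cC_i} G(x)$ a finitely generated $R_i$-module, and $T_{\cC_i}G(x) \cong G(x)$ for all $x \in \cC_i$. The last isomorphism says precisely that $(M_i)_x \cong G(x)$ compatibly with the localization maps, i.e.\ $G|_{\cC_i} = S(\widetilde{M_i})$ where $\widetilde{M_i}$ is the coherent sheaf on $\mathrm{Spec}\,R_i$ associated to $M_i$. The task is then to glue the local coherent sheaves $\widetilde{M_i}$ into a single coherent sheaf $\cF$ on $X$ with $S\cF \cong G$. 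Because $\cC_i \cap \cC_j$ is again an open locally ringed subposet and $G$ restricted there is simultaneously the specialization presheaf of $\widetilde{M_i}|_{\cC_i \cap \cC_j}$ and of $\widetilde{M_j}|_{\cC_i \cap \cC_j}$, full faithfulness of $S$ (Claim \ref{Claim 1}), applied on the open subscheme $U_i \cap U_j$, furnishes canonical isomorphisms $\varphi_{ij} \colon \widetilde{M_i}|_{U_i \cap U_j} \xrightarrow{\sim} \widetilde{M_j}|_{U_i \cap U_j}$; the cocycle condition $\varphi_{jk} \circ \varphi_{ij} = \varphi_{ik}$ on triple overlaps follows again from faithfulness of $S$, since all three composites induce the identity family of stalk maps. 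Standard gluing of sheaves then yields a sheaf $\cF$ of $\cO_X$-modules, coherent since it is locally $\widetilde{M_i}$ with $M_i$ finitely generated over the noetherian $R_i$, and by construction $S\cF \cong G$.

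The main obstacle is the converse direction, and specifically making the gluing step rigorous at the level of \emph{posets} rather than schemes: one must check that the open subposet $\cC_i \cap \cC_j$ is itself the specialization poset of the open subscheme $U_i \cap U_j$ (so that Claim \ref{Claim 1} applies to it), that the functor $T_{\cC_i}$ restricted to this intersection agrees with the analogous functor built from $U_i \cap U_j$, and that an admissible presheaf restricts to an admissible presheaf on each $\cC_i \cap \cC_j$. This last point is the delicate one: the overlap $U_i \cap U_j$ need not be affine, so one should instead observe that the isomorphism $G|_{\cC_i} \cong S(\widetilde{M_i})$ on all of $\cC_i$ already determines $G$ on the intersection as a genuine specialization presheaf, and then invoke full faithfulness of $S$ over the (possibly non-affine) noetherian scheme $U_i \cap U_j$ — which is exactly the generality in which Claim \ref{Claim 1} was proved. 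Once this compatibility is set up, the cocycle verification and the coherence of the glued sheaf are routine.

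A secondary point worth spelling out is well-definedness of the functor $T$ and of the ambient morphism of functors $G \to TG$: one checks that $r \cdot (s_x)_x := (\tfrac{r}{1} s_x)_x$ indeed makes $\varprojlim_{x} G(x)$ an $R_{\cC}$-module, that localizing it at $x$ maps canonically to $G(x)$, and that these maps are natural in $x \in \cC$; this is the structure with respect to which ``$T_{\cC_i}G(x) \cong G(x)$'' is an isomorphism \emph{of presheaves}, not merely a pointwise isomorphism, and it is what lets the local identifications $G|_{\cC_i} \cong S(\widetilde{M_i})$ be upgraded to isomorphisms in $PSh(\cO_{\cC_i})$ compatible with the $\cO_{\cC_i}$-module structure.
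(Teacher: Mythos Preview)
Your proposal is correct and follows essentially the same strategy as the paper: reduce to the affine case, use the Key Lemma to show $S\cF$ is admissible, and for the converse form $\widetilde{M_i}$ on each affine piece and invoke Claim~\ref{Claim 1} to glue. The paper's own proof is considerably terser---it treats only the affine case explicitly and then dispatches the gluing with the single line ``follows immediately from the definition of coherent sheaves and Claim~\ref{Claim 1}''---whereas you spell out the cocycle argument and the subtleties on overlaps; but the underlying architecture is the same.
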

\begin{proof}
Let $G \in PSh(\cO_{\cC})$ be an admissible presheaf. As the definition of admissibility is local, we can consider for simplicity the case where $(\cC, \cO_{\cC})$ is an affine noetherian locally ringed poset, i.e. is the corresponding locally ringed poset of a noetherian scheme $X=SpecR$ (via the specialization partial order).\\ 
\indent
Denote $M := \varprojlim_{x \in \cC} G(x)$. We know by the admissibility of $G$ that $M$ is a finitely generated $R$-module, and so $\widetilde{M} \in \cM(\cO_{X})$ is a coherent sheaf of $\cO_{X}$-modules. As $\widetilde{M}$ is coherent we know its stalk at every point $x$ is isomorphic to the localization of $M$ at $x$. But, by the (quasi-)admissibility of $G$ we know $M \otimes_{R} R_{x} \cong G(x)$, and so $S\widetilde{M} = G$, i.e. $G$ is in the essential image of $S$.\\ 
\indent
From Key Lemma (Lemma \ref{Key Lemma}) it is clear if $\cF$ is a coherent sheaf, it follows that $S\cF$ is an admissible presheaf.\\
\indent
The claim follows immediately from the definition of coherent sheaves \cite[p. 111]{hartshorne:ag} and Claim \ref{Claim 1}.
\end{proof}

\begin{definition}
\normalfont Denote by $\cM(\cO_{\cC})$ the full subcategory of the admissible presheaves in $PSh(\cO_{\cC})$.
\end{definition}
\indent
Thus we can conclude:

\begin{theorem}
\label{Theorem 1}
The specialization functor $S$ is an equivalence of categories $\cM(\cO_{X}) \cong \cM(\cO_{\cC})$.
\end{theorem}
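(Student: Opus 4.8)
The plan is to deduce Theorem \ref{Theorem 1} directly by combining the two main technical results already established, namely that $S$ is fully faithful (Claim \ref{Claim 1}) and that the essential image of $S$ is precisely the full subcategory $\cM(\cO_{\cC})$ of admissible presheaves (Claim \ref{Claim 2}). The relevant categorical fact I would invoke is the standard one: a fully faithful functor induces an equivalence of categories onto its essential image, viewed as a full subcategory of the target. So the whole argument is essentially bookkeeping once the two claims are in hand.

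First I would recall the elementary lemma that if $F \colon \cA \to \cB$ is fully faithful, then $F$ factors as $\cA \xrightarrow{F'} \mathrm{Im}(F) \hookrightarrow \cB$ where $\mathrm{Im}(F)$ denotes the full subcategory of $\cB$ on objects isomorphic to some $F(a)$, and $F'$ is an equivalence. Fullness and faithfulness of $F'$ are inherited from $F$ since $\mathrm{Im}(F)$ is a full subcategory; essential surjectivity of $F'$ is immediate from the definition of $\mathrm{Im}(F)$. Then one invokes the standard criterion that a fully faithful and essentially surjective functor is an equivalence (one builds a quasi-inverse by choosing, for each object $b \in \mathrm{Im}(F)$, an object $G(b) \in \cA$ and an isomorphism $F(G(b)) \cong b$, and uses fullness/faithfulness to extend $G$ to morphisms and to produce the natural isomorphisms $GF \cong \mathrm{id}$, $FG \cong \mathrm{id}$).

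Applying this with $F = S$, $\cA = \cM(\cO_{X})$, $\cB = PSh(\cO_{\cC})$: Claim \ref{Claim 1} gives that $S$ is fully faithful, and Claim \ref{Claim 2} identifies $\mathrm{Im}(S)$ with $\cM(\cO_{\cC})$. Hence $S$ corestricts to a fully faithful, essentially surjective functor $\cM(\cO_{X}) \to \cM(\cO_{\cC})$, which is therefore an equivalence of categories. This is exactly the assertion $\cM(\cO_{X}) \cong \cM(\cO_{\cC})$.

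I do not expect a genuine obstacle here, since all the hard content lives in Claim \ref{Claim 1} (which rests on the Key Lemma, Lemma \ref{Key Lemma}) and in Claim \ref{Claim 2}. The only point requiring mild care is making sure that "essential image" in the statement of Claim \ref{Claim 2} is interpreted as a \emph{full} subcategory of $PSh(\cO_{\cC})$ — which it is, since $\cM(\cO_{\cC})$ was defined as the full subcategory on admissible presheaves — so that fullness of the corestricted functor is automatic and no morphisms are lost when passing to the subcategory. With that observed, the theorem is an immediate formal consequence.
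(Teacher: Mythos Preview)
Your proposal is correct and matches the paper's approach exactly: the paper's own proof of Theorem~\ref{Theorem 1} is left empty, since the result follows immediately from Claim~\ref{Claim 1} (fully faithful) and Claim~\ref{Claim 2} (essential image equals $\cM(\cO_{\cC})$) via the standard fact that a fully faithful, essentially surjective functor is an equivalence. Your additional remark that $\cM(\cO_{\cC})$ is by definition a \emph{full} subcategory is the only point needing a moment's care, and you handled it correctly.
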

\begin{proof}
\end{proof}

\renewcommand\refname{References} 
\bibliographystyle{alpha}
\bibliography{bib}

\end{document}